\documentclass[12pt]{amsart}

\usepackage{amsmath, amssymb, amscd, eucal}
\usepackage{latexsym}
\usepackage{graphicx}

\usepackage{amsmath,amssymb,amsfonts,amsthm,amscd,indentfirst}

\textheight 8.5in
\textwidth 6 in
\topmargin 0.0cm
\oddsidemargin 0.5cm \evensidemargin 0.5cm
\parskip 0.0cm

\usepackage{indentfirst}
\usepackage{hyperref}
\usepackage{graphicx}

\usepackage{color}
\usepackage[dvipsnames]{xcolor}

\usepackage{setspace}
\usepackage{amsmath, amssymb,amsthm, amsfonts, color}

\numberwithin{equation}{section}

\newtheorem{thm}{Theorem}[section] 

\newtheorem{prop}{Proposition}[section]
\newtheorem{cor}{Corollary}[section] 

\theoremstyle{definition}

\theoremstyle{remark}
\newtheorem{rem}{Remark}[section]

\newcommand{\Ric}{\mbox{Ric}}
\newcommand{\R}{\mathbb R}

\newcommand{\be}{\begin{equation}}
\newcommand{\ee}{\end{equation}}
\newcommand{\bee}{\begin{equation*}}
\newcommand{\eee}{\end{equation*}}
\newcommand{\bal}{\begin{aligned}}
\newcommand{\eal}{\end{aligned}}

\def\vh{\vspace{.1cm}}

\def\p{\partial}

\def\la{\langle}
\def\ra{\rangle}

\def\Pi{\displaystyle{\mathbb{II}}}

\def\m{\mathfrak{m}}

\def\c{\mathfrak{c}}

\def\gext{g_{_{ext}}}

\begin{document}

\title{Horizons in noncompact fill-ins of nonnegative scalar curvature}

\author{Pengzi Miao}
\address[Pengzi Miao]{Department of Mathematics, University of Miami, Coral Gables, FL 33146, USA}
\email{pengzim@math.miami.edu}

\author{Sehong Park}
\address[Sehong Park]{Department of Mathematics, University of Miami, Coral Gables, FL 33146, USA}
\email{sxp3226@miami.edu}


\begin{abstract}
Given a complete Riemannian metric of nonnegative scalar curvature on 
$\Sigma \times (-\infty, 0 ] $, where $\Sigma$ denotes a $2$-sphere, 
we exhibit conditions that imply the existence of a closed minimal surface homologous to the boundary. 
\end{abstract}

\maketitle

\markboth{Pengzi Miao and Sehong Park}{Horizons in Noncompact NNSC Fill-ins}

\section{Introduction} 

Let  $\Sigma  $ denote a smooth, closed, orientable surface  that is topologically a $2$-sphere. 
Let $ N = \Sigma \times ( - \infty, 0 ] $.
Given a complete Riemannian metric $g$ with nonnegative scalar curvature on $N$, we consider 
the existence of closed minimal surfaces in $(N,g)$ homologous to the boundary $ \p N$. 

The perspective we take on this problem relates to the study of quasi-local mass in general relativity. 
Given a Riemannian metric $\sigma$  and  a function  $H$ on $ \Sigma$, 
in the derivation of 
positivity properties  of the quasi-local mass associated to $(\Sigma, \sigma, H )$, 
one often assumes there exists a compact, nonnegative scalar curvature (NNSC) 
fill-in of $(\Sigma, \sigma, H )$ (see \cite{Bartnik89, ST02, LY03, WY09}  for instance).  
Here a connected Riemannian manifold with boundary is called a NNSC fill-in of 
 $(\Sigma, \sigma, H)$ if the manifold  has nonnegative scalar curvature and 
  the  boundary can be diffeomorphically identified with $\Sigma$ 
 in such a way that the induced metric agrees with $ \sigma$ and the mean curvature equals $H$. 
 
In \cite{LLU22}, Lee, Lesourd and Unger initiated a study of noncompact, NNSC fill-ins of $(\Sigma, \sigma, H)$. 
Among other things, they proved that the result of Shi and Tam \cite{ST02} on 
the positivity of the Brown-York mass 
remains valid if a compact NNSC fill-in is replaced by a complete, noncompact, NNSC fill-in. 

Suppose $ (U,h )$ is a noncompact, NNSC fill-in  of $ (\Sigma, \sigma, H)$ where $ H > 0 $.
If $(U, h )$ contains a horizon, which is defined as a closed minimal surface homologous to $\p U$, 
then, passing to the outermost horizon  relative to $\p U$, 
one may focus on the finite region enclosed by $ \p U$ and the outermost horizon.
Effect of such a horizon on the boundary  $ \p U$ was studied  in 
\cite{M07, Lu-M17}, which were inspired by the Riemannian Penrose inequality \cite{Bray02, HI02}.

A naturally related question is: 
given a noncompact  fill-in  $(U, h)$, how to detect  if there exists  a horizon in $(U, h)$?
In this paper, we consider this question in a setting 
where a fill-in $U$ has simple topology of $ N = \Sigma \times ( - \infty, 0 ] $. 

In the context of the Riemannian Penrose inequality, 
without reference to closed minimal surfaces,
Zhu \cite{Zhu23} recently has derived a mass-systole inequality
based on
the $\mu$-bubble theory of Gromov \cite{Gromov19}.
Prompted by Zhu's work, 
we will apply the method in \cite{Zhu23} 
to study the existence of horizons in a fill-in $(N, g)$. 

We fix some notations and terminology first. 
Throughout, let 
$$ \mathcal{S} = \left\{ S \subset N  \ | \ S \ \text{is a closed, embedded surface homologous to } \p N \right\}. $$ 
Given a metric $g$ on $N$, 
 $ S \in \mathcal{S}$ is called a horizon in $(N,g)$  if 
$ S$ is a minimal surface. 
Let  $ t $ denote the standard coordinate on $(- \infty, 0]$.
Let 
$$ A_{_{-\infty} } (g)   = \inf \left\{ \liminf_{k \to \infty} \, | S_k | \ | \ \{ S_k \} \subset \mathcal{S},  \ 
\lim_{ k \to \infty} \max_{S_k} t = - \infty     \right\}. $$
Here $ | S |$ denote the area of a surface $ S$ in $(N,g)$. 
Equivalently, 
$$ A_{_{-\infty}} (g)   = \lim_{s \to - \infty} A_s (g) ,  $$
where $ A_s  (g) =  \inf \left\{ \, | S | \ | \  S \subset \mathcal{S}, \  \max_{S} t \le s   \right\}  $.
We also let $ A(g) = A_0(g)$, the infimum of  $|S| $ for all $ S \in \mathcal{S}$.

The boundary $ \p N$ of $(N,g)$ is called mean-convex if 
the mean curvature vector of $ \p N$  points to the inside of $N$. 
In terms of our sign convention for the mean curvature, this means 
$\p N$ has positive mean curvature with respect to the outward normal.

\begin{thm} \label{thm-main-horizon-via-sigma}
Suppose $(N,g)$ is complete with nonnegative scalar curvature,
with mean-convex boundary.  Suppose $ A(g) > 0 $.
Let $ \sigma $ be the induced metric  on $ \Sigma = \p N$. 
\begin{enumerate}

\vh

\item[(i)] If  $ \sigma  $ is  a round metric, i.e.  a metric of constant Gauss curvature, then
$(N,g)$ has a horizon if 
\be \label{eq-round-sigma}
 A_{ - \infty} (g)^\frac12  > 
| \Sigma |^\frac12  \, \left[  1 -  \frac{   \left( \int_{\Sigma }  H   \, d \sigma  \right)^2 }{16 \pi | \Sigma  |  }  \right] .
\ee
Here $ H $ is the mean curvature of $ \Sigma$ in $(N,g)$. 

\vh 

\vh

\item[(ii)] If  $ \sigma $ has positive Gauss curvature, then
$(N,g)$ has a horizon if 
\be \label{eq-notround-sigma}
 A_{ - \infty} (g)^\frac12  >  \frac{1}{ \sqrt{  4 \pi  }  } 
 \int_{\Sigma }  (H_0 - H) \, d \sigma .
\ee
Here $ H_0$ is the mean curvature of the isometric embedding of $ (\Sigma, \sigma)$ in $ \R^3$.

\vh 

\vh

\item[(iii)] For an arbitrary $\sigma$, $(N,g)$ has a horizon if 
$$  A_{ - \infty} (g)  \ge  | \Sigma  | . $$

\end{enumerate}

\end{thm}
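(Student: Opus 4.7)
My plan is to reduce all three parts to the strict inequality $A(g) < A_{-\infty}(g)$, and then invoke a standard geometric measure theory argument to extract the horizon. Granting the inequality, a minimizing sequence $\{S_k\} \subset \mathcal{S}$ with $|S_k| \to A(g)$ cannot satisfy $\max_{S_k} t \to -\infty$ (that would force $\liminf_k |S_k| \geq A_{-\infty}(g) > A(g)$), so after passing to a subsequence the $S_k$'s remain in a fixed compact slab $\Sigma \times [-C,0]$; Federer compactness for integer $2$-currents of locally bounded mass, together with codimension-one regularity for mass minimizers, then produces a smooth embedded closed minimal limit $\Sigma_\infty \in \mathcal{S}$. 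The condition $A(g)>0$ prevents area collapse, and strict mean-convexity of $\p N$ excludes $\Sigma_\infty = \p N$, so $\Sigma_\infty$ is a horizon. Part (iii) is then immediate: a small inward normal deformation of $\p N$ has area $|\Sigma| - \varepsilon \int_\Sigma H\, d\sigma + O(\varepsilon^2) < |\Sigma|$, so $A(g) < |\Sigma| \leq A_{-\infty}(g)$.

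For Parts (i) and (ii), I would argue by contradiction, assuming $(N,g)$ contains no horizon, and appeal to the $\mu$-bubble technique of Gromov as applied by Zhu in \cite{Zhu23}. For each large $-s > 0$ I would take the compact slab $N_s := \Sigma \times [s,0]$ and minimize
$$ \mathcal{A}(E) = \mathcal{H}^2(\p^\ast E \cap N_s^\circ) - \int_E h \, dV $$
over Caccioppoli sets $E \subset N_s$ separating $\p N$ from $\Sigma \times \{s\}$, where the prescribing function $h$ is a profile in the signed distance from $\p N$ satisfying a Riccati-type ODE adapted to a Euclidean reference configuration: a round geodesic ball of radius $\sqrt{|\Sigma|/(4\pi)}$ in Case (i), and the body enclosed by the Weyl isometric embedding of $(\Sigma,\sigma)$ in $\R^3$ in Case (ii). The function $h$ is calibrated so that $h > H$ on $\p N$ and $h$ is sufficiently negative near $\Sigma \times \{s\}$, providing barriers at both ends; the no-horizon hypothesis excludes interior degeneration, so a smooth stable $\mu$-bubble $\Sigma_\ast$ appears in the interior of $N_s$ with $H_{\Sigma_\ast} = h|_{\Sigma_\ast}$. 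Testing the stability inequality of $\Sigma_\ast$ against $\varphi \equiv 1$, combined with $R_g \geq 0$, the pointwise bound $|A|^2 \geq \tfrac{1}{2} H^2$, and Gauss-Bonnet $\int_{\Sigma_\ast} K \leq 4\pi$, should yield an integral inequality that via the ODE for $h$ bounds $|\Sigma_\ast|^{1/2}$ above by the right-hand side of \eqref{eq-round-sigma} or \eqref{eq-notround-sigma}. Letting $s \to -\infty$ then exhibits a sequence in $\mathcal{S}$ with areas violating the hypothesis, the required contradiction.

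The chief obstacle is the design of the profile $h$: its ODE must be engineered so that the chain of inequalities used in the stability analysis (nonnegative scalar curvature, $|A|^2 \geq H^2/2$, and Gauss-Bonnet) is saturated on the intended Euclidean model, producing precisely the right-hand side of the statement rather than some loose constant. This is the geometric content that converts the $\mu$-bubble method from a qualitative existence tool into a source of sharp bounds. A secondary difficulty is arranging the two barriers uniformly in $s$, so that the $\mu$-bubble stays in the interior and the construction survives the limit $s \to -\infty$; as in Zhu's work, the no-horizon hypothesis plays an essential role in excluding a stable minimal sphere to which the bubble could collapse.
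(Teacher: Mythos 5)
Your proposal diverges from the paper's actual route, and the divergence is where the gaps lie. The paper does not try to extract the constants in \eqref{eq-round-sigma} and \eqref{eq-notround-sigma} from a $\mu$-bubble stability computation. Instead it proves a lower bound $\m_{_B}(\Sigma,\sigma,H)\ge\sqrt{A(g)/16\pi}$ (Proposition \ref{prop-bmass-lower-est}, by gluing an admissible extension to $(N,g)$, smoothing the corner with the Lee--Lesourd--Unger theorem, and applying Zhu's mass--systole inequality to the resulting asymptotically flat manifold), shows $A_{-\infty}(g)=A(g)$ in the absence of horizons (Proposition \ref{prop-nohorizon-NG}), and then quotes the known \emph{upper} bounds \eqref{eq-mb-sigma-round} and \eqref{eq-mb-mby} on the Bartnik mass due to Miao/Miao--Xie and Shi--Tam. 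The right-hand sides of (i) and (ii) are exactly $\sqrt{16\pi}$ times those upper bounds. Your plan to recover them directly by designing a radial profile $h$ for a $\mu$-bubble functional faces a concrete obstruction: the barrier condition at $\p N$ that keeps the bubble off the boundary is a \emph{pointwise} comparison between $h$ and $H$, whereas the stated thresholds involve the \emph{integrals} $\int_\Sigma H\,d\sigma$ and $\int_\Sigma (H_0-H)\,d\sigma$ (with $H_0$ determined by solving the Weyl embedding problem, a global object). A distance-profile $\mu$-bubble argument could at best produce bounds in terms of $\min_\Sigma H$ or $\max_\Sigma H$; the integral quantities in (ii) come from the Shi--Tam quasi-spherical foliation plus the positive mass theorem, which your argument never invokes. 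So the ``chief obstacle'' you flag is not a technical calibration issue but the actual mathematical content, and there is no evidence the profile you need exists.

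There is also a gap in your first paragraph. From the failure of $\max_{S_k}t\to-\infty$ along a minimizing sequence you conclude that the $S_k$ lie in a fixed compact slab, but a lower bound on $\max_{S_k}t$ controls only the top of each surface: $S_k$ could consist of a piece near $\p N$ joined by a thin neck of negligible area to a component at depth $t=-k$, so $\min_{S_k}t$ need not be bounded and mass can still escape to infinity in the current limit (possibly leaving a limit that is not in the homology class of $\p N$). Controlling exactly this escape is what Proposition \ref{prop-nohorizon-NG} is for: the paper runs Zhu's construction to produce \emph{outer-minimizing} constant-mean-curvature spheres $\Sigma_k$ with mean curvature $\epsilon_k\to0$, uses the Zhou--Zhu curvature estimates plus the Gromov--Lawson compactness theorem to show that if the $\Sigma_k$ do not drift they converge to a closed minimal surface, and uses the outer-minimizing property to compare $A_{-\infty}(g)$ with $A(g)$ when they do drift. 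Your part (iii) inherits this gap as well, since it relies on the same compact-slab extraction; the paper instead deduces (iii) directly from Proposition \ref{prop-nohorizon-NG} together with $A(g)<|\Sigma|$ (which you do justify correctly via the inward deformation of the mean-convex boundary).
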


\begin{rem}
Condition 
\eqref{eq-round-sigma}  is nearly sharp 
as one may construct  rotationally symmetric examples, by perturbing a standard cylinder for instance, 
which are free of horizons and have $A_{-\infty} (g)^\frac12 $ less than but arbitrarily close to the quantity 
on the right side of \eqref{eq-round-sigma}.
\end{rem}

\begin{rem}
We do not expect \eqref{eq-notround-sigma} to be sharp, 
as  indicated by its proof later.
\end{rem}

\begin{rem}
It is conceivable that case (iii) may also follow from the theory of mean curvature flow,
together with  \cite[Theorem 8.11]{GL83}.
\end{rem}

We will derive Theorem \ref{thm-main-horizon-via-sigma} in the following way: 
given an $(N, g)$, derive a lower estimate of the Bartnik quasi-local mass of its boundary 
data $(\p N, \sigma, H)$ via $A(g)$; 
if  $(N,g)$ has no horizons, show that $A(g)$ is equal to $A_{-\infty} (g)$.
If these 
are completed, the result then follows from existing known upper estimates of the Bartnik mass. 

We recall the  concept of the Bartnik  mass \cite{Bartnik89} below. 
 Given a  triple $(\Sigma, \sigma, H)$, the associated Bartnik mass 
can be defined as 
$$ \m_{_B} (\Sigma, \sigma, H) = \inf \left\{ \m (\gext ) \ | \ (E, \gext) \ \text{is an admissible extension of}  \ (\Sigma, \sigma, H) \right\}. $$ 
Here  $\m(  \, \cdot \, ) $  denotes the ADM mass \cite{ADM61}, and an asymptotically flat Riemannian $3$-manifold
$(E, \gext)$ with boundary $\p E$ is an admissible extension of  $ (\Sigma, \sigma, H)$  if

\begin{enumerate}
\item[i)]  $E$ is topologically $ \R^3 $ minus a ball and $\gext$  has nonnegative scalar curvature;
\item[ii)]  $ \p E$ with the induced metric is isometric to $ (\Sigma, \sigma)$,  and, under the isometry,
the mean curvature of $ \p E$  in  $ (E, \gext)$  equals $H$; and

\item[iii)] $ \p E$ is outer-minimizing in  $(E, \gext)$.

 \end{enumerate} 
We refer readers to \cite{AJ19, J19, McCormick20} for a number of variants of $\m_{_B} ( \, \cdot \, )$.

The following lower estimate on $\m_{_B} (\cdot)$ can be derived 
applying the work of   Lee-Lesourd-Unger \cite{LLU22} and Zhu \cite{Zhu23}. 

\begin{prop} \label{prop-bmass-lower-est}
Suppose $(N,g)$ is complete with nonnegative scalar curvature,
with mean-convex boundary.  
Let $ \sigma $ be the induced metric  on $ \Sigma = \p N$ 
and $H$ be  the mean curvature of $ \Sigma $ in $ (N,g)$. 
Then 
$$   \m_{_B} (\Sigma, \sigma, H )  \ge  \sqrt{ \frac{ A (g) }{ 16 \pi} }  . $$  
\end{prop}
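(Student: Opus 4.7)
Given any admissible extension $(E, \gext)$ of $(\Sigma, \sigma, H)$ with ADM mass $\m(\gext)$, the plan is to glue it to $(N, g)$ across their common boundary $\Sigma$ and reduce the problem to a Penrose-type estimate on the resulting manifold; taking the infimum over admissible extensions will then produce the stated lower bound on $\m_{_B}(\Sigma,\sigma,H)$. First I would form $(M, \bar g)$ by identifying $\p E$ with $\p N = \Sigma$. Since the induced metrics agree and the two boundary mean curvatures both equal $H$, the mean-convexity assumption ensures that $\bar g$ is a Lipschitz metric of nonnegative scalar curvature in the distributional sense, and can be approximated by smooth metrics of nonnegative scalar curvature while preserving the ADM mass at the $E$-end. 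This is the step where \cite{LLU22} is invoked, since that framework handles precisely the situation in which one side of the gluing is a noncompact fill-in.

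The second step is to apply the $\mu$-bubble argument of \cite{Zhu23} to $(M, \bar g)$. Following Zhu, I would fix a warping weight $h$ whose asymptotics at the $E$-end are calibrated to $\m(\gext)$ and minimize a functional of the form $\mathcal{A}(\Omega) = |\p^* \Omega| - \int_\Omega h \, dv$ over Caccioppoli sets $\Omega \subset M$ whose reduced boundary is homologous to $\p E$. The asymptotic flatness on the $E$ side and the product-like behavior on the $N$ side should provide barriers that confine a minimizer $\Sigma_*$ to a compact region. Comparing against test surfaces drawn from $\mathcal{S}$ situated deep inside $N$, while using the outer-minimizing property of $\p E$ in $E$ to rule out competitors entirely on the $E$ side, should force $|\Sigma_*| \geq A(g)$. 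The stability inequality for $\Sigma_*$, combined with the Gauss equation, the nonnegative scalar curvature of $\bar g$, and Gauss--Bonnet applied to the $2$-sphere $\Sigma_*$, should then yield $\m(\gext) \geq \sqrt{|\Sigma_*|/(16\pi)} \geq \sqrt{A(g)/(16\pi)}$, as in Zhu's computation.

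The principal obstacle is the $\mu$-bubble analysis on this two-ended manifold: one must choose $h$ so that $\Sigma_*$ exists, stays in the relevant region, and has area at least $A(g)$, and one must verify that the gluing and smoothing imported from \cite{LLU22} are compatible with the asymptotic conditions Zhu imposes on $h$. Once these technical points are handled, the proposition follows by taking infima over admissible extensions.
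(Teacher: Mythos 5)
Your proposal follows essentially the same route as the paper: glue $(E,\gext)$ to $(N,g)$ along $\Sigma$, smooth the corner via Lee--Lesourd--Unger while nearly preserving the mass, bound the mass from below by the minimal area in the homology class of $\Sigma$, and use the outer-minimizing property of $\p E$ to identify that minimal area with $A(g)$ before taking the infimum over extensions. The only difference is that where you propose to re-run Zhu's $\mu$-bubble construction by hand on the glued manifold, the paper simply invokes Zhu's mass-systole inequality (Theorem 1.4 of \cite{Zhu23}) as a black box on the smoothed metrics $\tilde g_\epsilon$, together with a routine $C^0$ area-comparison to pass the systolic area through the smoothing.
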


Adopting an appropriate $\mu$-bubble construction of Zhu \cite{Zhu23} to the setting of 
$(N,g)$, one can show

\begin{prop} \label{prop-nohorizon-NG}
Suppose $(N,g)$ is complete with nonnegative scalar curvature,
with mean-convex boundary.  
Suppose $ A(g) > 0 $. 
If $(N,g)$ has no horizons, 
then
$$    A_{ - \infty} (g)  = A (g) . $$  
\end{prop}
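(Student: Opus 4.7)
I would argue by contrapositive: assuming $A_{-\infty}(g) > A(g)$, construct a horizon and obtain a contradiction. Since $s \mapsto A_s(g)$ is monotone non-increasing and tends to $A_{-\infty}(g)$ as $s \to -\infty$, the assumption yields $s_0 < 0$ and $\epsilon_0 > 0$ such that every $S \in \mathcal{S}$ with $\max_S t \le s_0$ satisfies $|S| \ge A(g) + \epsilon_0$. At the same time there are surfaces in $\mathcal{S}$ of area arbitrarily close to $A(g)$, and any such surface must meet $\Sigma \times (s_0, 0]$.

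The plan, following Zhu \cite{Zhu23}, is to produce for each large $L$ a smooth embedded surface $T_L \in \mathcal{S}$ confined to the compact slab $M_L = \Sigma \times [-L, 0]$, with $|T_L| \to A(g)$ as $L \to \infty$. To that end I would minimize, over Caccioppoli sets $E \subset M_L$ with $\partial N \subset \partial^* E$, a $\mu$-bubble functional of the form
\begin{equation*}
\mathcal{F}_L(E) = \mathcal{H}^2(\partial^* E \cap \operatorname{int}(M_L)) - \int_E h_L \, dV_g ,
\end{equation*}
with a smooth weight $h_L$ blowing up at $\Sigma \times \{-L\}$ so as to confine the minimizer to the interior of the slab. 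The Gromov--Zhu $\mu$-bubble existence and regularity theory, together with codimension-one interior regularity in ambient dimension three, yields a smooth, stable, embedded prescribed-mean-curvature surface $T_L$ strictly inside $M_L$ with $T_L \in \mathcal{S}$; by designing $h_L \to 0$ on compact subsets of $N$, one arranges $|T_L| \to A(g)$, and the strong maximum principle combined with the mean-convexity of $\partial N$ prevents $T_L$ from touching $\partial N$.

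For $L$ large enough that $|T_L| < A(g) + \epsilon_0/2$, the choice of $s_0$ forces $\max_{T_L} t > s_0$, so each $T_L$ meets the compact set $K = \Sigma \times [s_0, 0]$. Since $(N,g)$ is complete, every $\Sigma \times [-R, 0]$ is compact, and varifold compactness on an exhaustion of $N$ yields a subsequential limit $V_\infty$ of total mass at most $A(g)$. Stability of $T_L$, nonnegative scalar curvature, and $h_L \to 0$ locally make $V_\infty$ a stationary integral varifold whose support, by codimension-one regularity, is a smooth embedded minimal surface in $N$. The decisive step is to rule out escape of mass to $t = -\infty$: any such escape would provide a sequence in $\mathcal{S}$ with $\max t \to -\infty$ and limiting area at most $A(g)$, contradicting the defining property of $s_0$ and $\epsilon_0$. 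Hence $V_\infty$ is supported on a closed embedded minimal surface homologous to $\partial N$, i.e., a horizon, which contradicts the hypothesis. The main obstacle, as I see it, is precisely the calibration of the weights $h_L$ and the no-mass-escape analysis in the limit; these are exactly the subtleties that the $\mu$-bubble machinery of \cite{Zhu23}, leveraging the stability--curvature interplay under nonnegative scalar curvature, is designed to handle.
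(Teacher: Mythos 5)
Your overall strategy --- arguing the contrapositive and producing, via $\mu$-bubbles with weights blowing up on slabs $\Sigma\times[-L,0]$, a sequence of stable prescribed-mean-curvature surfaces that must accumulate in a fixed compact set --- is the same Zhu-style strategy the paper uses. But two steps that you treat as design choices are in fact the crux of the matter, and as written both have genuine gaps.

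First, the claim $|T_L|\to A(g)$. A minimizer of $\mathcal{F}_L(E)=\mathcal{H}^2(\partial^*E)-\int_E h_L$ is not an area minimizer, and comparing $\mathcal{F}_L(E_L)$ with $\mathcal{F}_L(D_{S_j})$ for a near-area-minimizer $S_j$ leaves the term $\int_{E_L\triangle D_{S_j}}h_L$, which you cannot control: the symmetric difference need not lie in a compact set where $h_L$ is small, and $h_L$ is unbounded near $\Sigma\times\{-L\}$. ``Designing $h_L\to 0$ on compacts'' does not by itself force the $\mu$-bubble's area down to $A(g)$. The paper's area control comes instead from the \emph{strict outer-minimizing} property of the $\mu$-bubble surfaces $\Sigma_k$ (inherited from the minimization), which gives $|\Sigma_k|\le|S_j|$ only once $\max_{\Sigma_k}t\le\min_{S_j}t$, i.e.\ only after the $\Sigma_k$ have drifted past the competitor. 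That is precisely why the paper's proof is organized as a dichotomy --- either the $\Sigma_k$ meet a fixed slab infinitely often (and one extracts a horizon), or they drift to $-\infty$, in which case outer-minimizing yields $A_{-\infty}(g)\le\liminf_k|\Sigma_k|\le A(g)$ --- rather than as a direct construction of surfaces with area tending to $A(g)$ that are forced to stay near $\partial N$.

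Second, compactness of the limit. Your ``no-mass-escape'' argument rules out the $T_L$ migrating entirely below $t=s_0$, but it does not rule out the subsequential local limit being a \emph{noncompact} embedded minimal surface with finite area and an end running down the cylinder. Such a limit is not an element of $\mathcal{S}$, so it does not furnish the ``sequence in $\mathcal{S}$ with $\max t\to-\infty$'' that your contradiction requires; truncations of a noncompact surface are not closed surfaces homologous to $\partial N$. The paper closes this hole by passing the $\mu$-bubble stability inequality to the limit and invoking Gromov--Lawson \cite[Theorem 8.11]{GL83}: under $R_g\ge 0$, a complete stable minimal surface of finite area cannot be noncompact. (The local smooth subconvergence itself is justified there by the curvature estimates of Zhou--Zhu \cite{ZZ20}, using the uniform area bound $|\Sigma_k|\le|\partial N|$ --- again a consequence of outer-minimizing.) Without this ingredient, the word ``closed'' in your final sentence is exactly what remains unproved.
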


Combined together, Propositions \ref{prop-bmass-lower-est} and \ref{prop-nohorizon-NG} directly  imply  

\begin{thm} \label{thm-main-aleft-area}
Suppose $(N,g)$ is complete with nonnegative scalar curvature, with mean-convex boundary. 
Let $ \sigma $ be the induced metric  on $ \Sigma = \p N$ 
and $H$ be  the mean curvature of $ \Sigma $ in $ (N,g)$. 
If $ A(g) > 0 $ and 
$$    \sqrt{ \frac{ A_{_{-\infty} }  (g) }{16 \pi} }  > \m_{_{B} } (\Sigma, \sigma, H)  ,    $$
then $(N,g)$ contains a horizon.
 \end{thm}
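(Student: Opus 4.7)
The plan is to prove Theorem \ref{thm-main-aleft-area} by contradiction, feeding the two preceding propositions together. Since the theorem is stated as a direct consequence of Propositions \ref{prop-bmass-lower-est} and \ref{prop-nohorizon-NG}, the argument should be essentially a chain of inequalities; there is no genuine obstacle beyond checking that the hypotheses of the two propositions carry over verbatim to the setting of the theorem.

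First I would assume, toward a contradiction, that $(N,g)$ contains no horizon. The hypotheses of Theorem \ref{thm-main-aleft-area} (completeness, nonnegative scalar curvature, mean-convex boundary, and $A(g)>0$) match exactly the assumptions of Proposition \ref{prop-nohorizon-NG}, so applying that proposition yields
\be
 A_{-\infty}(g) \;=\; A(g). \notag
\ee
In particular, the strict inequality assumed in the theorem becomes
\be
 \sqrt{\frac{A(g)}{16\pi}} \;>\; \m_{_B}(\Sigma, \sigma, H). \notag
\ee

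Next I would invoke Proposition \ref{prop-bmass-lower-est}, whose hypotheses are again satisfied, to obtain the lower bound
\be
 \m_{_B}(\Sigma, \sigma, H) \;\ge\; \sqrt{\frac{A(g)}{16\pi}}. \notag
\ee
Combining these two displayed inequalities gives
\be
 \sqrt{\frac{A(g)}{16\pi}} \;>\; \m_{_B}(\Sigma, \sigma, H) \;\ge\; \sqrt{\frac{A(g)}{16\pi}}, \notag
\ee
a contradiction. Hence $(N,g)$ must contain a horizon.

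Since the entire argument is a two-line syllogism, the only thing to verify carefully is that the hypothesis $A(g)>0$, which is needed to invoke Proposition \ref{prop-nohorizon-NG}, is indeed part of the theorem's hypothesis; and that the lower bound in Proposition \ref{prop-bmass-lower-est} uses the same boundary data $(\Sigma,\sigma,H)$ that appears in the Bartnik mass on the right side of the assumed inequality. Both are immediate from the statements. No additional geometric or analytic input is required at this stage, as the work is absorbed entirely into the two propositions.
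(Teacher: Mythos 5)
Your proof is correct and is exactly the argument the paper intends: the authors simply state that Propositions \ref{prop-bmass-lower-est} and \ref{prop-nohorizon-NG} "combined together directly imply" the theorem, and your contradiction chain is the straightforward way to combine them.
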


If $ \sigma $ is a metric of positive Gauss curvature and $H $ is a positive function, 
it follows from the work of Shi-Tam \cite{ST02} that
\be \label{eq-mb-mby}
 \m_{_B} (\Sigma, \sigma, H ) \le \frac{1}{8\pi} \int_\Sigma (H_0 - H ) \, d \sigma . 
\ee
If  $ \sigma = \sigma_o$ is a round metric, it was further known \cite{M07, MX19}   that
\be \label{eq-mb-sigma-round}
 \m_{_B} (\Sigma, \sigma_o, H ) \le \sqrt{  \frac{ |\Sigma |  } { 16 \pi}  } 
 \left[ 1 - \frac{1}{ 16 \pi | \Sigma |   } \left( \int_\Sigma H \, d \sigma_o \right)^2 \right]. 
\ee
Cases (i) and (ii) of Theorem \ref{thm-main-horizon-via-sigma} follow from Theorem \ref{thm-main-aleft-area}
and the existing estimates \eqref{eq-mb-sigma-round} and \eqref{eq-mb-mby}, respectively. 

Case (iii) of Theorem \ref{thm-main-horizon-via-sigma} 
follows from Proposition \ref{prop-nohorizon-NG} and 
the fact $ A(g) < | \Sigma | $. 

We want to mention that in the case of a compact fill-in of $(\Sigma, \sigma, H),$ results of a similar nature to Theorem \ref{thm-main-aleft-area} were obtained by Shi-Tam \cite{ST07}.

The assumption $ A(g) > 0 $ in Proposition \ref{prop-nohorizon-NG} 
is imposed
to apply Zhu's construction \cite{Zhu23}.
Under this assumption, 
solutions to a family of suitably formulated $\mu$-bubble problem do not drift to infinity
(\cite[Section 2]{Zhu23}).

In the applications below, we replace  the assumption $ A(g) > 0$ with an assumption 
$(N,g)$ has a positive capacity.  

\begin{thm} \label{thm-Ag-by-Cg}
Suppose $(N,g)$ is complete with nonnegative scalar curvature, with mean-convex boundary. 
Suppose there exists a harmonic function $ w $ on $(N,g)$ with 
$ w = 1  $ at $  \p N $  and $ 0 < w  < 1 $  in the interior of $N$. 
If 
\be \label{eq-Arealim-gdlim}
 \limsup_{s \to -\infty} | \nabla w | < \frac14 \c \, \m^{-2}_{_{B} } (\p N)  ,    
\ee
then $(N,g)$ contains a horizon.
Here $ \c > 0 $ is the constant given by
$$ \int_{\p N} | \nabla w | \, d \sigma = 4 \pi \c , $$
and 
$ \m_{_B} (\p N)$ denotes the Bartnik mass of the boundary data of $(N, g)$.
\end{thm}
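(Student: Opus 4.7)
The plan is to deduce the existence of a horizon from Theorem \ref{thm-main-aleft-area} by establishing, under the hypotheses, both $A(g) > 0$ and $\sqrt{A_{-\infty}(g)/(16\pi)} > \m_{_B}(\p N)$. Both will flow from a single flux--area inequality derived from the harmonicity of $w$.

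For any $S \in \mathcal{S}$, let $\Omega_S \subset N$ be the bounded region cobounded by $\p N$ and $S$. Since $w = 1$ on $\p N$ is the maximum of $w$, $\nabla w$ points in the outward direction of $N$ along $\p N$; applying the divergence theorem to $\nabla w$ on $\Omega_S$ and using $\Delta w = 0$ yields
\[
4\pi \c \;=\; \int_{\p N} | \nabla w | \, d\sigma \;=\; \left| \int_S \nabla w \cdot \nu_{_S} \, dA \right| \;\le\; \int_S | \nabla w | \, dA \;\le\; |S| \, \max_S | \nabla w|,
\]
so $|S| \ge 4 \pi \c / \max_S |\nabla w|$ for every $S \in \mathcal{S}$. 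This single inequality drives everything.

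Since $w$ is a bounded harmonic function with smooth boundary data $w |_{\p N} = 1$, boundary regularity gives a uniform bound on $|\nabla w|$ near $\p N$; together with continuity on compact subsets and the assumed finiteness of $\limsup |\nabla w|$ at infinity, this yields $M := \sup_N |\nabla w| < \infty$. Consequently $A(g) \ge 4 \pi \c / M > 0$. For the asymptotic estimate, let $L := \limsup_{s \to -\infty} |\nabla w|$; for any $\epsilon > 0$ choose $s_0$ so that $|\nabla w| \le L + \epsilon$ on $\{ t \le s_0 \}$. For $S \in \mathcal{S}$ with $\max_S t \le s \le s_0$ one has $\max_S |\nabla w| \le L + \epsilon$, hence $|S| \ge 4 \pi \c / (L + \epsilon)$; passing to the infimum and then letting $s \to -\infty$ and $\epsilon \to 0$,
\[
A_{-\infty}(g) \;\ge\; \frac{4 \pi \c}{L} \;>\; \frac{4 \pi \c}{\tfrac{1}{4} \c \, \m_{_B}^{-2}(\p N)} \;=\; 16 \pi \, \m_{_B}^{2}(\p N),
\]
where the strict inequality uses \eqref{eq-Arealim-gdlim}. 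Rearranging gives $\sqrt{A_{-\infty}(g)/(16 \pi)} > \m_{_B}(\p N)$, so Theorem \ref{thm-main-aleft-area} produces a horizon.

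The only delicacy is pinning down the intended meaning of $\limsup_{s \to -\infty} |\nabla w|$ so that both the global bound $M < \infty$ and the end-estimate $\sup_{\{t \le s_0\}} |\nabla w| \le L + \epsilon$ are valid. Once this is understood as control of $|\nabla w|$ on the ends of $N$, the argument reduces to routine bookkeeping, and the real work has already been done in Theorem \ref{thm-main-aleft-area}.
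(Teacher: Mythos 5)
Your proposal is correct and follows essentially the same route as the paper: the same flux identity $\int_S \langle \nabla w, \nu\rangle\, d\sigma = 4\pi\c$ gives $|S|\max_S|\nabla w| \ge 4\pi\c$, which yields both $A(g)>0$ and the lower bound $A_{-\infty}(g) \ge 4\pi\c/\limsup_{s\to-\infty}|\nabla w| > 16\pi\,\m_{_B}^2(\p N)$, after which Theorem \ref{thm-main-aleft-area} applies. The only cosmetic difference is that the paper phrases the end-estimate as $4\pi\c \le A_{-\infty}(g)\,\limsup_{s\to-\infty}|\nabla w|$ and explicitly notes that one may assume $A_{-\infty}(g)<\infty$, which sidesteps your division by $L$ when $L=0$.
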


\begin{rem}
The capacity of  a surface $S \in \mathcal{S}$,  relative to $ \p N$,    can be defined as 
$$ \c_{_S}  : = \frac{1}{4 \pi}  \int_{D_{_S}}  | \nabla u_{_S} |^2 \, d V  
= \frac{1}{4\pi}  \int_{\p N} | \nabla u_{_S} | d \sigma, $$
where $ u_{_S}$ is the unique $g$-harmonic function on $D_{_S}$, the region bounded by $ S$ and $ \p N$, 
satisfying 
$ u_{_S} = 1 $ at $ \p N$ and $ u_{_S} = 0 $ at $ S$.
The capacity of $(N,g)$, relative  to $\p N$,  can then be defined as 
$$ \c (g) = \inf  \left\{ \c_{_S}  \ | \ S \in \mathcal{S} \right\} . $$ 
It can be checked that 
  $ \c (g) > 0 $  
if and only if there exists a harmonic function $ w $  on $(N, g)$ such that 
$ w  = 1  $ at $  \p N $ and  $ 0 < w < 1 $  in the interior of $N$.
\end{rem}

Without reference to $\m_{_B} (\cdot) $, the following holds.

\begin{thm} \label{thm-Ag-by-Cg-2}
Suppose $(N,g)$ is complete with nonnegative scalar curvature, with mean-convex boundary. 
Suppose there exists a harmonic function $ w $ on $(N,g)$ with 
$ w = 1  $ at $  \p N $  and $ 0 < w  < 1 $  in the interior of $N$. 
If 
\be \label{eq-Arealim-gdlim-2}
 \limsup_{s \to -\infty} | \nabla w | \le  \frac{1}{ | \p N | } \int_{\p N} | \nabla w | \, d \sigma , 
\ee
then $(N,g)$ contains a horizon.
\end{thm}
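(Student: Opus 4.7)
The plan is to argue by contradiction using Proposition \ref{prop-nohorizon-NG}, combined with a flux identity for $w$ and the area-decreasing property of a mean-convex boundary. Specifically, I would aim to show that the hypothesis \eqref{eq-Arealim-gdlim-2} forces $A_{-\infty}(g) \ge |\Sigma|$, whereas mean-convexity yields $A(g) < |\Sigma|$; the absence of a horizon would then force $A(g) = A_{-\infty}(g)$ by Proposition \ref{prop-nohorizon-NG}, producing a contradiction.

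The key step is a flux identity. For any $S \in \mathcal{S}$, let $\Omega_S$ denote the compact region in $N$ bounded by $\p N$ and $S$. Applying the divergence theorem to $\Delta w = 0$ on $\Omega_S$, together with $w = 1$ on $\p N$ and $0 < w < 1$ in the interior, yields
\[
4 \pi \c = \int_{\p N} |\nabla w| \, d \sigma = -\int_S \partial_n w \, d \sigma_S \le \int_S |\nabla w| \, d \sigma_S \le \left( \sup_S |\nabla w| \right) |S|,
\]
where $n$ denotes the unit normal to $S$ pointing away from $\p N$. This produces the pointwise estimate $|S| \ge 4 \pi \c / \sup_S |\nabla w|$ for every $S \in \mathcal{S}$.

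I would then extract two consequences. The hypothesis \eqref{eq-Arealim-gdlim-2} bounds $|\nabla w|$ outside a compact subset of $N$, and standard elliptic boundary regularity bounds $|\nabla w|$ up to $\p N$, so $M := \sup_N |\nabla w| < \infty$. The flux estimate then yields $A(g) \ge 4 \pi \c / M > 0$, verifying the hypothesis of Proposition \ref{prop-nohorizon-NG}. Applying the flux estimate to surfaces $S \in \mathcal{S}$ with $\max_S t \le s$ and letting $s \to -\infty$, one obtains
\[
A_{-\infty} (g) \ge \frac{ 4 \pi \c }{\limsup_{s \to -\infty} |\nabla w|} \ge \frac{ 4 \pi \c }{ \frac{1}{|\p N|} \int_{\p N} |\nabla w| \, d \sigma } = |\p N| = |\Sigma|,
\]
where the second inequality is exactly \eqref{eq-Arealim-gdlim-2}.

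Finally, mean-convexity of $\p N$ permits a small inward normal perturbation to produce a surface in $\mathcal{S}$ of strictly smaller area (since the first variation of area equals $-\int_{\p N} H \, d \sigma < 0$), giving $A(g) < |\Sigma|$. If $(N,g)$ contained no horizon, Proposition \ref{prop-nohorizon-NG} would give $A(g) = A_{-\infty}(g) \ge |\Sigma|$, contradicting $A(g) < |\Sigma|$. The step I expect to require the most care is the global bound $\sup_N |\nabla w| < \infty$, which should follow from Schauder-type boundary estimates together with the asymptotic control \eqref{eq-Arealim-gdlim-2} but requires confirming the regularity of $g$ and $\p N$ implicit in the setup.
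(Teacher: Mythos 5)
Your proposal is correct and follows essentially the same route as the paper: the same flux identity $\int_{\p N}|\nabla w|\,d\sigma = -\int_S \p_n w\,d\sigma_S \le |S|\sup_S|\nabla w|$ is used to get both $A(g)>0$ and $A_{-\infty}(g)\ge |\p N|$, after which the paper invokes case (iii) of Theorem \ref{thm-main-horizon-via-sigma}, which is itself proved exactly by the combination you spell out (Proposition \ref{prop-nohorizon-NG} plus $A(g)<|\Sigma|$ from mean-convexity). The only cosmetic difference is that you unpack that citation rather than quote it.
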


Together with Cheng-Yau's gradient estimate \cite{CY75}, the above implies 

\begin{cor} \label{cor-pc-afric}
Suppose $(N,g)$ is complete with nonnegative scalar curvature, with mean-convex boundary. 
Suppose $ \c (g) > 0 $ and
\be \label{eq-Arealim-gdlim-3}
 \lim_{s \to - \infty}    \min  \Ric_- (x)    = 0  . 
\ee
Here $ \min \Ric_- (x)  $ denotes the minimum negative Ricci curvature of $g$ at  $ x \in N$.
Then $(N,g)$ contains a horizon.
\end{cor}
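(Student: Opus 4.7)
The plan is to verify the hypothesis of Theorem \ref{thm-Ag-by-Cg-2} and conclude. The assumption $\c(g) > 0$ provides, via the remark following Theorem \ref{thm-Ag-by-Cg}, a harmonic function $w$ on $(N, g)$ with $w = 1$ on $\p N$ and $0 < w < 1$ in the interior. The right-hand side of \eqref{eq-Arealim-gdlim-2} equals $\frac{1}{|\p N|} \int_{\p N} |\nabla w| \, d\sigma$, which is strictly positive because the Hopf boundary-point lemma (applied at the boundary maximum $w = 1$) forces $|\nabla w| > 0$ along $\p N$. Hence it will suffice to show that $|\nabla w(x)| \to 0$ as $t(x) \to -\infty$, since then the left-hand side of \eqref{eq-Arealim-gdlim-2} is $0$ and Theorem \ref{thm-Ag-by-Cg-2} yields a horizon.

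The main tool will be the Cheng-Yau gradient estimate \cite{CY75}: if $u > 0$ is harmonic on a metric ball $B(x, 2r) \subset \mathrm{int}(N)$ on which $\Ric_g \ge -2K$, then $|\nabla u(x)|/u(x) \le C(1/r + \sqrt{K})$ for a dimensional constant $C$. Applied to $w$, together with $0 < w \le 1$, this reads $|\nabla w(x)| \le C(1/r + \sqrt{K})$. Given $\epsilon > 0$, I would first fix $r$ large enough that $C/r < \epsilon/2$, then arrange that $\Ric_g \ge -2K$ holds on the \emph{entire} ball $B(x, 2r)$ with $K$ satisfying $C\sqrt{K} < \epsilon/2$, which yields $|\nabla w(x)| < \epsilon$.

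The nontrivial step is upgrading the asymptotic Ricci assumption \eqref{eq-Arealim-gdlim-3} to a uniform bound on the whole ball $B(x, 2r)$. I would use two geometric facts. First, completeness of $(N, g)$ together with compactness of $\p N$ implies, via a Hopf-Rinow type argument, that $d(x, \p N) \to \infty$ whenever $t(x) \to -\infty$: otherwise a sequence $x_k$ with $t(x_k) \to -\infty$ and $d(x_k, \p N)$ bounded would lie in a precompact subset of $N$ and possess a $g$-subsequential limit $x^* \in N$, contradicting $t(x^*) > -\infty$. In particular $B(x, 2r) \subset \mathrm{int}(N)$ once $t(x)$ is sufficiently negative. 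Second, interpreting \eqref{eq-Arealim-gdlim-3} as saying that for every $\delta > 0$ there is a compact $K_\delta \subset N$ on whose complement $\Ric_g \ge -\delta$, once $x$ lies $g$-far enough from $K_\delta$, the whole ball $B(x, 2r)$ sits outside $K_\delta$ and thus enjoys $\Ric_g \ge -\delta$ throughout. Combining, one obtains $|\nabla w(x)| < \epsilon$ for $t(x)$ sufficiently negative, finishing the argument. The main obstacle is precisely this transfer from an asymptotic pointwise Ricci assumption to a neighborhood bound on a fixed-radius metric ball; it is resolved using completeness of $(N, g)$ and the fact that $t$-depth forces large $g$-distance from $\p N$.
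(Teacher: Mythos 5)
Your proposal is correct and follows essentially the same route as the paper: obtain the harmonic function $w$ from $\c(g)>0$, use the Cheng--Yau gradient estimate together with \eqref{eq-Arealim-gdlim-3} to get $\limsup_{s\to-\infty}|\nabla w|=0$, and conclude via Theorem \ref{thm-Ag-by-Cg-2} (the paper invokes Theorem \ref{thm-Ag-by-Cg} or \ref{thm-Ag-by-Cg-2} in exactly this way). The only difference is that you spell out the localization step --- that $t(x)\to-\infty$ forces $d(x,\p N)\to\infty$ so the Ricci lower bound holds on a whole ball $B(x,2r)$ --- which the paper leaves implicit; your argument for this is sound.
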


A basic example here is the spatial Schwarzschild fill-in of a round sphere with constant mean curvature.
On $ \R^3 \setminus \{ 0 \} $, consider the spatial Schwarzschild metric 
$$ g_m = \left( 1 + \frac{ m }{2 | x | } \right)^4 g_0 $$
 where $ m > 0 $ is a constant and $g_0$ is the Euclidean metric. 
Given any $ r > \frac{m}{2}$, let 
$$ N = \{ x \in \R^3 \ | \ 0 < | x | \le r \} . $$
$(N, g_m)$ satisfies conditions in Corollary \ref{cor-pc-afric}, 
and has a horizon at $ | x |  = \frac{m}{2} $.

\section{Proof of Propositions \ref{prop-bmass-lower-est}, \ref{prop-nohorizon-NG} 
and Theorems \ref{thm-Ag-by-Cg}, \ref{thm-Ag-by-Cg-2}} 

Before giving the proofs, 
we recall the concept of asymptotically flat manifolds and their mass. 
A smooth Riemannian $3$-manifold $(M, g)$ is called asymptotically flat if $M$,  
outside a compact set, is diffeomorphic to $ \R^3$ minus a ball;
the associated metric coefficients  satisfy   
$$
g_{ij} = \delta_{ij} + O ( |x |^{-\tau} ), \ 
 \p g_{ij} = O ( |x|^{-\tau -1}) ,  \ \ \p \p g_{ij} = O (|x|^{-\tau -2} ), 
$$
for some $ \tau > \frac12$; and the scalar curvature  of $g$ is  integrable.
Under these AF conditions, the limit 
$$
\m (g) : = \lim_{ r \to \infty  } \frac{1}{16 \pi} \int_{ |x | = r } \sum_{j, k} ( g_{jk,j} -  g_{jj,k} ) \frac{x^k}{ |x| } 
$$
exists and is called the ADM mass  \cite{ADM61} of $(M,g)$. 
It is a result of Bartnik \cite{Bartnik86} and of Chru\'{s}ciel \cite{Chrusciel86}
that $\m (g) $ is independent on the choice of the coordinates $\{ x_i \}$.
The Riemannian positive mass theorem, 
first proved by Schoen and Yau \cite{SchoenYau79}, and by Witten \cite{Witten81},
asserts that, if  $(M, g)$ is complete, asymptotically flat $3$-manifold with nonnegative scalar curvature without boundary, 
then $ \m (g)  \ge 0  $
and equality  holds if and only if $(M, g)$ is isometric to the Euclidean space $ \R^3$. 

We first  prove Proposition \ref{prop-bmass-lower-est}. 
The idea is simple. 
Given an admissible extension $(E,\gext)$, one may glue $(N,g)$ and $(E,\gext)$ along $\Sigma$ 
to obtain a manifold $(M,\tilde g)$.  Using a result of Lee-Lesourd-Unger \cite{LLU22}, 
one can approximate $\tilde g$ by complete metrics $\tilde g_\epsilon$ of  nonnegative scalar curvature without changing much the mass. 
Applying Zhu’s mass-systole inequality \cite{Zhu23} to $\tilde g_\epsilon$ then produces a lower bound of 
the mass in terms of $A(\tilde g)$, 
which can be shown to equal $A(g)$ via the outer-minimizing property of $\Sigma$.

\begin{proof}[Proof of Proposition \ref{prop-bmass-lower-est}]
Given $(E, \gext)$, an admissible extension of $(\Sigma,\sigma,H)$, 
let $(M, \tilde g)$ be 
  the manifold that is 
obtained by gluing $(N, g)$ and $(E, \gext) $ along their
common boundary $ (\Sigma, \sigma) $. 
By \cite[Theorem 3.3]{LLU22}, for small $\epsilon>0$,  there exists a family of metrics $\{ \tilde g_{\epsilon} \}$
with nonnegative scalar curvature on $M$, satisfying 
\begin{itemize}
\item $\|\tilde g_\epsilon - \tilde g\|_{C^{0}} < \epsilon$;
\item $|\m(\tilde g_\epsilon) - \m(\tilde g)| < \epsilon$.
\end{itemize}
The first condition implies 
there is an absolute constant $C>0$ such that 
\begin{equation}\label{eq-area-compare}
 (1-C\epsilon)|S|_{\tilde g} \le |S|_{\tilde g_\epsilon} \le (1+C\epsilon)|S|_{\tilde g}   
\end{equation}
for any closed surface $S\subset M$. 
Taking the infimum of \eqref{eq-area-compare} over  $S\in\mathcal S_M $,
the set of all closed surfaces in $M$ that is homologous to $ \Sigma$,
we obtain 
\begin{equation}\label{eq-A-eps-compare}
(1-C\epsilon)A(\tilde g) \le A(\tilde g_\epsilon) \le (1+C\epsilon)A(\tilde g) , 
\end{equation}
where 
$$
A(\tilde g)
  = \inf\big\{|S|_{\tilde g} \ | \ S \in \mathcal{S}_M \big\}, \ 
  A(\tilde g_\epsilon)
  = \inf\big\{|S|_{\tilde g_\epsilon} \ | \ S \in \mathcal{S}_M \big\}.
$$
 
As  $M\cong \mathbb R^{3}\setminus\{0\}$, 
the mass-systole inequality in  \cite[Theorem 1.4] {Zhu23} applies to $\tilde g_\epsilon$, which gives 
\be \label{eq-mass-systole-1}
\m(\tilde g_\epsilon) \ge \sqrt{\frac{A(\tilde g_\epsilon)}{16\pi}}.
\ee
It follows from \eqref{eq-A-eps-compare}, \eqref{eq-mass-systole-1} and the fact  $|\m(\tilde g_\epsilon) - \m(\tilde g)| < \epsilon$ that 
$$
\m(\tilde g) 
\ge \m(\tilde g_\epsilon)-\epsilon 
\ge \sqrt{\frac{A(\tilde g_\epsilon)}{16\pi}} - \epsilon 
\ge \sqrt{\frac{(1-C\epsilon)A(\tilde g)}{16\pi}} - \epsilon.
$$
Letting $\epsilon\to 0$ yields
\begin{equation}\label{eq:mass-lower-A-tildeg}
    \m(\tilde g) \ge \sqrt{\frac{A(\tilde g)}{16\pi}}.
\end{equation}

Note that $\Sigma = \p M^+$ is outer-minimizing in $(E,\gext)$.
This implies, whenever a bounded region $\Omega $ in $M$  protrudes into $M^+$, 
 $ | \p \Omega | \ge | \p ( \Omega \cap N ) |$.
 From this, it follows  
 \be \label{eq-agn-agm}
 A(g) = A(\tilde g) . 
\ee
We conclude from  \eqref{eq:mass-lower-A-tildeg} and \eqref{eq-agn-agm} that 
$$ \m(\gext)=\m(\tilde g) \ge \sqrt{\frac{A(g)}{16\pi}} , $$
which proves Proposition\ref{prop-bmass-lower-est}.
\end{proof}

Next, we prove Proposition \ref{prop-nohorizon-NG}. 

\begin{proof}[Proof of Proposition \ref{prop-nohorizon-NG}]
Under the assumption of $A(g) > 0 $, 
one can apply Zhu’s construction in \cite{Zhu23} to $(N,g)$ to obtain 
a sequence of surfaces  $\{\Sigma_k\} \subset \mathcal{S}$ such that 
\begin{itemize}
    \item $\Sigma_k$ is a $2$-sphere of constant  mean curvature, arising as 
    the boundary of the limit of a sequence of minimizers to an appropriate $\mu$-bubble functional; 
     
    \item the mean curvature of $\Sigma_k$, denoted by $\epsilon_k$, tends to $0$ as $k \to \infty$; and
     \item $\Sigma_k$ is strictly outer-minimizing with respect to $ \p N $. 
\end{itemize}
The proof of this assertion is  verbatim to that of 
Proposition 2.2 and Lemma 3.2 in \cite{Zhu23}. 
(One simply replaces the surface $S_{r_0}$ and the region 
$ B_{r_0} \setminus \{ O \} $ in the proof of Proposition 2.2 in \cite{Zhu23}
by $ \p N$ and $N$, respectively;  
the assumption of $ A (g) > 0 $ 
guarantees that 
the associated $\mu$-bubble solutions 
are contained in a fixed compact set independent on a parameter $\beta$ in \cite{Zhu23};
and $\Sigma_k $ here corresponds to the surface $ \Sigma_{\tilde \epsilon} $,
obtained by letting $\beta \to \infty$ in the proof of Lemma 3.2 in \cite{Zhu23}.)

To proceed,  let $ N_{t } = \Sigma \times [ t, 0 ] \subset N$ for any $ t < 0 $. 
Suppose there exists  some $T <\infty$ such that 
$$ \Sigma_k \cap N_{_T}  \ne \emptyset \ \text{for infinity many } k . $$
As $ | \Sigma_k | \le | \p N |$, 
by Theorem 3.6  in the work of Zhou-Zhu \cite{ZZ20}, which is a variant of  the  curvature estimates of Schoen-Simon-Yau \cite{SSY75}, 
passing to a subsequence,  
$\Sigma_k$ converges locally smoothly to a  minimal surface $ \widetilde \Sigma$, which has area no greater than $| \p N |$.
Moreover, by the $\mu$-bubble stability of $\Sigma_k$, 
$ \widetilde \Sigma $ satisfies 
$$    \int_{\widetilde \Sigma}  |\widetilde \nabla\phi|^2 + K\phi^2 
      \ge
\frac{1}{2} \int_{\widetilde \Sigma}  ( R_g + | \Pi |^2 )\phi^2 , $$
where $ \phi \in C_c^\infty ( \widetilde \Sigma)$, $ \widetilde \nabla $ and the integral are with respect to the induced metric on $ \widetilde \Sigma $, 
$K$ is the Gauss curvature of $ \widetilde \Sigma $, 
$ R_g$ is the scalar curvature of $g$, and $ \Pi$ denotes the second fundamental form of $ \widetilde \Sigma $. 
Since  $ R_g \ge 0 $ and $|\widetilde \Sigma | \le | \p N | <\infty$,
by Theorem 8.11 of Gromov-Lawson \cite{GL83}, 
$\widetilde \Sigma $ can not be noncompact. 
Consequently, $\widetilde \Sigma $ is a closed minimal surface in $ \mathcal{S}$. 

Next, suppose there are no closed minimal surfaces in $\mathcal{S}$. Then, for any $ t < 0 $, 
$$  \Sigma_k \cap N_{t}  =  \emptyset \ \text{for sufficiently large} \ k. $$
That is $ \lim_{k\to\infty} \max_{\Sigma_k} t = -\infty $. 
Let  $\{S_j\}\subset\mathcal{S}$ be an area minimizing sequence, i.e. 
$$ \lim_{j\to\infty} |S_j| = A(g). $$
For each $S_j$,  there is an index $k_j$ such that
$  \max_{\Sigma_{k}} t \le \min_{S_j} t, \ \forall \, k \ge k_j  $. 
As $\Sigma_{k}$ is outer-minimizing, $ |\Sigma_{k}| \le |S_j| $ for these $k$. 
Therefore, 
$$
A_{-\infty}(g) \le \liminf_{k \to\infty} |\Sigma_{k }| \le |S_j| . 
$$
Taking $ j \to \infty$, we obtain 
$ A_{-\infty} (g) \le A(g) $, 
and consequently, 
$$ A_{-\infty}(g) = A(g). 
$$
This proves Proposition \ref{prop-nohorizon-NG}.
\end{proof}

\begin{proof}[Proof of Theorems \ref{thm-Ag-by-Cg} and \ref{thm-Ag-by-Cg-2}]
By \eqref{eq-Arealim-gdlim}, 
$  \m_{_{B} } (\p N) < \infty  $  and  $  \limsup_{s \to -\infty} | \nabla u | < \infty $. 
The latter then implies 
$$ \sup_N | \nabla w | \le \Lambda $$ 
for some constant $ \Lambda > 0 $.

Given any $ S \in \mathcal{S}$,  let $ D$ be the region bounded by $ S$ and $ \p N$.
Let $ \nu $ be the unit normal along $ S$ pointing to the interior of $D$.
Then
\be \label{eq-u-flux}
 \int_{S} \la \nabla w , \nu \ra d \sigma  = \int_{\p N} | \nabla w |  d \sigma  = 4 \pi \c  ,
\ee
which implies 
\bee
4 \pi \c \le | S | \max_{S} | \nabla w | \le | S | \Lambda .
\eee
Therefore, 
\be \label{eq-Ag-1}
 A (g) = \inf_{ S \in \mathcal{S} }  |S |  \ge 4 \pi \c \Lambda^{-1} > 0 .  
\ee

Similarly, one can estimate $ A_{-\infty} ( g)$.
Given any $ s < 0 $, consider $ S \in \mathcal{S}$ satisfying  
$ \max_S t \le s $. 
By \eqref{eq-u-flux},
\bee
4 \pi \c \le   | S | \, \max_{S} | \nabla w | 
\le  |S | \, \sup_{\Sigma \times ( - \infty, s ] } | \nabla w |  .
\eee
Taking the infimum over $ S \in \mathcal{S}$ with $ \max_{S} t \le s $, one has 
\bee
4 \pi \c  \le  A_s (g) \, \sup_{\Sigma \times ( - \infty, s ] } | \nabla w |  .
\eee
Letting $ s \to - \infty$ gives 
\be \label{eq-est-Aninfty}
4 \pi \c  \le  A_{ - \infty}  (g) \, \limsup_{s \to - \infty}  | \nabla w |  .
\ee

To proceed, it suffices to assume $ A_{- \infty} < \infty$. 
In this case,  by \eqref{eq-est-Aninfty} and \eqref{eq-Arealim-gdlim}, 
\be \label{eq-est-Aninfty-1}
16 \pi  \m^2_{_{B} } (\p N)   <  A_{ - \infty}  (g) .
\ee
Theorem \ref{thm-Ag-by-Cg}  follows from \eqref{eq-Ag-1}, \eqref{eq-est-Aninfty-1} and 
Theorem \ref{thm-main-aleft-area}. 

Without reference to $\m_{_B} (\cdot) $, \eqref{eq-est-Aninfty} and \eqref{eq-Arealim-gdlim-2} imply
\be  \label{eq-est-Aninfty-2}
 A_{ - \infty}  (g) \ge | \p N |  . 
\ee
Theorem \ref{thm-Ag-by-Cg-2}  follows from \eqref{eq-Ag-1}, \eqref{eq-est-Aninfty-2} and 
case (iii) of Theorem \ref{thm-main-horizon-via-sigma}. 
\end{proof}
 
\begin{proof}[Proof of Corollary \ref{cor-pc-afric}]
The assumption  of $ \c (g) > 0 $ implies that  there exists  a harmonic function $ w $ on $(N,g)$ with 
$ w = 1  $ at $  \p N $  and $ 0 < w  < 1 $  in the interior of $N$
(see \cite{BZ25, M24} for instance).
By Cheng-Yau's  gradient estimate \cite{CY75}, 
\eqref{eq-Arealim-gdlim-3} implies 
$$  \limsup_{s \to - \infty}  | \nabla w |   = 0 .  $$
It follows from Theorem   \ref{thm-Ag-by-Cg} or 
Theorem   \ref{thm-Ag-by-Cg-2} that $(N,g)$ has a horizon. 
\end{proof}

\vspace{.3cm}

\end{document}